\theoremstyle{plain}
\newtheorem{thm}{Theorem}
\newtheorem{lem}{Lemma}
\newtheorem{prop}{Proposition}
\newtheorem{cor}{Corollary}
\theoremstyle{definition}
\newtheorem{defn}{Definition}
\theoremstyle{remark}
\title{On linear codes and distinct weights  }
\author{Alessio Meneghetti}
\begin{document}
\maketitle
\abstract{
We provide a combinatorial construction for linear codes attaining the maximum possible number of distinct weights. We then introduce the related problem of determining the existence of linear codes with an arbitrary number of distinct non-zero weights, and we completely determine a solution in the binary case. 
}


\section*{Introduction}
A recent work on extremal properties of linear codes presented the problem of determining the maximum possible number of distinct non-zero weights in a linear code \cite{shi2018etal}, partially providing solutions and leaving the general case as a conjecture. For the sake of completeness, we introduce the problem and the conjecture.
\\
Let $C$ be an $[n,k]_q$ linear code and let us denote with $\mathcal{A}\left(C\right)$ the weight distribution of the code $C$. We also denote with $\mathrm{len}\left(C\right)$ the length of a code $C$ and with $\mathrm{dim}\left(C\right)$ its dimension.\\
We are interested in the maximum number of distinct weights that a linear code of dimension $k$ over $\mathbb{F}_q$ can have:
\begin{equation}\nonumber
\mathcal{L}_q\left(k\right):=\max\left\{\left|\mathcal{A}\left(C\right)\right|\;:\;\;\mathrm{dim}\left(C\right)=k\right\}
\end{equation}
We remark that this value is obtained by considering all possible codes of dimension $k$ over $\mathbb{F}_q$, regardless on their length. A related problem would be to determine the maximum number of weights in a code of length $n$, a value which we will denote as $\mathcal{L}_q\left(n,k\right)$. The relation between $\mathcal{L}_q\left(k\right)$ and $\mathcal{L}_q\left(n,k\right)$ is
\begin{equation}\nonumber
\mathcal{L}_q\left(k\right):=\max_n\mathcal{L}_q\left(n,k\right) ,
\end{equation}
and an interested reader can go through \cite{shi2018etal} for more results on this function.\\
By linearity it has to hold 
\begin{equation}\label{bound}
\mathcal{L}_q\left(k\right)\leq \frac{q^k-1}{q-1}+1 ,
\end{equation}
and in \cite{shi2018etal}
it is proved that $\mathcal{L}_2\left(k\right)\ge2^k$, and 
\begin{equation}\label{eq: case k2}
\mathcal{L}_q\left(k\right)\ge q+2 ,
\end{equation}
namely bound \eqref{bound} is achieved with equality by binary codes regardless of their dimension, and by $q$-ary codes of dimension $2$. In the same work this result was presented as a hint that Bound \eqref{bound} could actually be the value of $\mathcal{L}_q\left(k\right)$.

In \cite{alderson2018maximum}, the authors prove the conjecture both from an algebraic and a geometric point of view, and call Maximum Weight Spectrum (MWS) the codes with maximum number of distinct weights. They conclude by providing bounds on the minimum length of MWS codes.\\ 
We show here a third, combinatorial proof of the same conjecture, namely in Section \ref{sec: proof} we prove the following:
\begin{thm}\label{thm: Lqk}
$\mathcal{L}_q\left(k\right)= \frac{q^k-1}{q-1}+1 .$
\end{thm}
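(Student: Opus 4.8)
The plan is to prove the two inequalities in Theorem~\ref{thm: Lqk} separately. The upper bound $\mathcal{L}_q\left(k\right)\le\frac{q^k-1}{q-1}+1$ is exactly Bound~\eqref{bound} and needs no further argument: nonzero scalar multiples of a codeword have equal weight, so a code of dimension $k$ over $\mathbb{F}_q$ has at most $\frac{q^k-1}{q-1}$ distinct nonzero weights, plus one more for the zero weight. All the work therefore goes into the reverse inequality, which I would establish by an explicit, uniform construction producing, for every $k$ and every prime power $q$, an $[n,k]_q$ code whose weight distribution has exactly $\frac{q^k-1}{q-1}+1$ entries. The guiding idea is a ``blown-up simplex'': take as columns of a generator matrix one nonzero representative of every one-dimensional subspace of $\mathbb{F}_q^k$, but repeat the representative of the $j$-th subspace $2^{j-1}$ times, so that the weight of a codeword records, bit by bit, which of these subspaces it vanishes on.

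Concretely, I would set $N:=\frac{q^k-1}{q-1}$, enumerate the one-dimensional subspaces of $\mathbb{F}_q^k$ as $\langle v_1\rangle,\dots,\langle v_N\rangle$ with $v_j\neq 0$, and let $G$ be the $k\times(2^N-1)$ matrix made, for $j=1,\dots,N$, of a block of $2^{j-1}$ columns all equal to $v_j$; let $C$ be the code generated by $G$. Since $\langle e_1\rangle,\dots,\langle e_k\rangle$ occur among the $\langle v_j\rangle$, the columns of $G$ span $\mathbb{F}_q^k$, so $\mathrm{dim}\left(C\right)=k$. Fixing the standard bilinear form $\langle\cdot,\cdot\rangle$ on $\mathbb{F}_q^k$, the codeword $c_u:=uG$ attached to $u\in\mathbb{F}_q^k$ is constant on each block, with value $\langle u,v_j\rangle$ on the $j$-th one, so that
\[
\mathrm{wt}\left(c_u\right)=\sum_{j\,:\,\langle u,v_j\rangle\neq 0}2^{j-1}.
\]

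It then remains to count the distinct weights. For $u\neq 0$ put $Z(u):=\{\,j:\langle u,v_j\rangle=0\,\}=\{\,j:\langle v_j\rangle\subseteq u^{\perp}\,\}$, which records exactly which of the $N$ one-dimensional subspaces lie in the hyperplane $u^{\perp}$; since a hyperplane is the span of the points it contains, $u\mapsto Z(u)$ depends only on $\langle u\rangle$ and takes $N$ pairwise distinct values as $\langle u\rangle$ ranges over all one-dimensional subspaces. Because distinct subsets of $\{1,\dots,N\}$ produce distinct sums of the powers $2^{j-1}$, the $N$ weights $\mathrm{wt}\left(c_u\right)=\sum_{j\notin Z(u)}2^{j-1}$ are pairwise distinct and nonzero, one for each one-dimensional subspace; adjoining the zero codeword gives $\left|\mathcal{A}\left(C\right)\right|=N+1=\frac{q^k-1}{q-1}+1$, hence $\mathcal{L}_q\left(k\right)\ge\frac{q^k-1}{q-1}+1$, which with Bound~\eqref{bound} proves the theorem. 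I do not expect a genuine obstacle: once the blown-up-simplex idea with power-of-two multiplicities is in place, everything is routine and works uniformly for all $k\ge 1$; the only point deserving care is the elementary projective bookkeeping that $u\mapsto u^{\perp}$ is a bijection between one-dimensional subspaces and hyperplanes and that a hyperplane is recovered from its set of points, which is what legitimises passing from ``distinct codewords up to scalars'' to ``distinct values of $Z(u)$''.
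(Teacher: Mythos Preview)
Your proof is correct and takes a genuinely different route from the paper's own argument. The paper proves the lower bound by induction on $k$: it assumes a code $\bar{C}_k$ with $\frac{q^k-1}{q-1}+1$ weights, then uses Lemma~\ref{lem: case x exists} to pass to dimension $k+1$ by adjoining a row $(x\mid 1\cdots 1)$, provided a vector $x$ with all distances $\mathrm{d}(c_i,x)$ distinct exists; when no such $x$ is available, Lemma~\ref{lem: x generic} triples all coordinates and tweaks one entry of $x^{(3)}$ to strictly increase $\lvert\mathrm{w}(C_k-x)\rvert$, and iterating eventually reduces to the first case. This scheme needs the base case $k=2$ from \cite{shi2018etal}, treats $q=2$ separately, and yields only an iterative description of $\bar{C}_{k+1}$ with the rough length bound $\mathrm{len}(\bar{C}_{k+1})\le(3^{q^k-1}+1)\,\mathrm{len}(\bar{C}_k)$. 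Your blown-up simplex with multiplicities $2^{j-1}$ is instead a single explicit generator matrix that works uniformly for all $q$ and all $k\ge 1$, with length $2^{N}-1$ where $N=\frac{q^k-1}{q-1}$; the argument is shorter and entirely self-contained, resting only on non-degeneracy of the standard form and uniqueness of binary expansions. The trade-off is that the paper's inductive machinery (particularly the idea behind Lemma~\ref{lem: case x exists}) is reused later in Lemma~\ref{lem: induction large s} to manufacture codes with a \emph{prescribed} number of distinct weights, a flexibility your one-shot construction does not immediately provide.
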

In the subsequent section, we discuss a related problem, the existence of linear codes accordingly to the number of distinct weights.


\section{Proof of Theorem \ref{thm: Lqk}}\label{sec: proof}
In this section we prove Theorem \ref{thm: Lqk}. The proof is by induction, where we use the results of \cite{shi2018etal} as a starting point for deducing the general case. \\
To ease the notation, we denote a q-ary code of dimension $k$ with $C_k$, its generator matrix with $G_k$, and we use $\mathrm{w}\left(S\right)$ for the set of distinct weights of a set of vectors $S$. Finally, we use $\bar{C}_k$ (and $\bar{G}_k$) for a code with the largest possible number of distinct weights, i.e. $\left|\mathrm{w}\left(\bar{C}_k\right)\right|=\mathcal{L}_q\left(k\right)$. We can thus state Theorem \ref{thm: Lqk} in a slightly alternative way.
\begin{thm}\label{thm: Lqk alternative}
For each $q$ and for each $k$ there exists an $[n,k]_q$ code $\bar{C}_k$ for which 
\begin{equation}\label{eq: number weights}
\left|\mathrm{w}\left(\bar{C}_k\right)\right|=\frac{q^k-1}{q-1}+1 .
\end{equation}
\end{thm}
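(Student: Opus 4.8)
The plan is to prove the theorem by induction on $k$, using the explicit constructions from \cite{shi2018etal} as the base case and building a generator matrix $\bar{G}_{k}$ from $\bar{G}_{k-1}$ by appending a suitable block of columns. Write $N_k := \frac{q^k-1}{q-1}$ for the number of projective points, so that the target is $N_k+1$ distinct weights. The base case is $k=1$ (a single nonzero weight, trivially) — or $k=2$, where \eqref{eq: case k2} gives a code with $q+2 = N_2+1$ weights.

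For the inductive step, suppose $\bar{C}_{k-1}$ is an $[m,k-1]_q$ code realizing $N_{k-1}+1$ distinct weights, say $0 = w_0 < w_1 < \dots < w_{N_{k-1}}$, with generator matrix $\bar{G}_{k-1}$. I would form
\[
\bar{G}_k = \left(\begin{array}{c|c}
\bar{G}_{k-1} & 0 \\ \hline
* & v
\end{array}\right),
\]
where the bottom-left block $*$ and the new coordinate block $v$ (a block of repeated symbols in the last row, zero above) are chosen so that the new codewords — those involving the last row of $\bar{G}_k$ with a nonzero coefficient — pick up $N_k - N_{k-1} = q^{k-1}$ genuinely new weight values, while the codewords supported on the first $k-1$ rows retain their $N_{k-1}+1$ old weights (including $0$). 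The key quantitative point is that $N_k + 1 = (N_{k-1}+1) + q^{k-1}$, so exactly $q^{k-1}$ new weights must be created; these correspond to the $q^{k-1}$ cosets of $\langle \bar{G}_{k-1}\rangle$-type contributions, i.e. to the distinct nonzero scalar-multiples-and-shifts arising from the last coordinate. I would choose the length of the appended $v$-block and a "spreading" pattern in $*$ so that the $q-1$ nonzero scalings of the new generator, combined with all $q^{m'}$ choices on the old coordinates, yield $q^{k-1}$ distinct values, all strictly larger than $w_{N_{k-1}}$ (achieved by making the $v$-block long enough to dominate). Concretely, one can take $*$ to run over all of $\langle \bar{G}_{k-1}\rangle$ scaled appropriately and pad with an all-ones-type block of carefully chosen length; a counting/greedy argument then shows the $q^{k-1}$ new weights are distinct from each other and from the old ones.

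The main obstacle I anticipate is the \emph{simultaneous} control of two things: (i) ensuring the $q^{k-1}$ new weight values are pairwise distinct, and (ii) ensuring none of them collides with an old weight $w_i$. Point (ii) is handled cheaply by lengthening the $v$-block so every new codeword outweighs every old one. Point (i) is the delicate part: it requires that the map sending a choice of last-row coefficient $\lambda \in \mathbb{F}_q^*$ and an old-coordinate pattern to its weight contribution be injective on a set of size $q^{k-1}$. This is where I would lean on the structure of $\bar{G}_{k-1}$ — in particular on the fact that its columns, viewed projectively, realize as many distinct "partial weights" as possible — and on a careful choice of how $*$ interleaves scalar multiples of rows of $\bar{G}_{k-1}$, so that different $(\lambda, \text{pattern})$ pairs land on different integers. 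I would verify the count $q^{k-1}$ exactly rather than just bounding it, since the induction has no slack: the bound \eqref{bound} must be met with equality at every step.
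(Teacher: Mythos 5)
Your overall strategy coincides with the paper's: induct on $k$, and extend $\bar{G}_{k-1}$ by one row of the form $\left[\,x \mid 1\cdots 1\,\right]$ (with a zero block above the new coordinates), so that the old codewords keep their $N_{k-1}+1$ weights and the new codewords, whose weights are of the form $(\text{length of the all-ones block}) + \mathrm{d}(c,x)$ with $c$ ranging over $\bar{C}_{k-1}$, contribute the missing $q^{k-1}$ values. Your point (ii) is handled exactly as in the paper, by taking the appended block of ones to have length equal to the maximum weight $m$ of $\bar{C}_{k-1}$, which forces every new weight to exceed every old one.

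However, your point (i) is precisely the crux of the theorem, and your proposal does not actually resolve it: you need a vector $x \notin \bar{C}_{k-1}$ such that the coset $\bar{C}_{k-1}-x$ has all $q^{k-1}$ weights pairwise distinct, and you only gesture at ``leaning on the structure of $\bar{G}_{k-1}$'' and a ``counting/greedy argument.'' It is not true in any obvious way that the maximality of the weight spectrum of $\bar{C}_{k-1}$ by itself guarantees such an $x$ exists for that code as given, and no counting argument is supplied. The paper closes this gap with a separate lemma: if no translate of the current code has more distinct coset weights than $\bar{C}_{k-1}-x$, replace the code by the one obtained by \emph{tripling every coordinate} (so all distances become multiples of $3$, and the weight spectrum count is preserved), and then alter a single coordinate of the tripled translate $x^{(3)}$ at a position where two colliding codewords differ; since every distance to $x^{(3)}$ is divisible by $3$ and the modification changes exactly one distance by $\pm 1$, this breaks one collision without creating any new one, strictly increasing the number of distinct coset weights. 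Iterating (at most $q^{k-1}-1$ times, at the cost of lengthening the code) eventually yields a code and a vector $x$ with all $q^{k-1}$ coset weights distinct, after which your construction applies. Note this trick needs $q\ge 3$ (the binary case is already settled in \cite{shi2018etal}), a distinction your sketch does not make. Without some concrete mechanism of this kind, your inductive step is incomplete exactly at the point you yourself identify as delicate.
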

The assumption of our proof is therefore that for all dimensions up to $k$, there exists a linear code $\bar{C}_k$ with $\frac{q^k-1}{q-1}+1$ distinct weights. 
\\
Starting by a code $\bar{C}_k$ we then obtain a new code $\bar{C}_{k+1}$ of dimension $k+1$ with $\left|\mathrm{w}\left(\bar{C}_{k+1}\right)\right|=\frac{q^{k+1}-1}{q-1}+1$.
\\
\begin{lem}\label{lem: case x exists}
Let $\bar{C}_k=\left\{c_1,\ldots,c_{q^k}\right\}$ satisfies Equation \eqref{eq: number weights}. If there exists $x\notin C$ such that $\mathrm{d}\left(c_i,x\right)\neq \mathrm{d}\left(c_j,x\right)$  for each pair of indices $i\neq j$, then there also exists a code $\bar{C}_{k+1}$ satisfying Equation \eqref{eq: number weights}.
\end{lem}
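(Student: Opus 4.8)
The plan is to realize $\bar{C}_{k+1}$ as the span of a padded copy of $\bar{C}_k$ together with a padded copy of the vector $x$. The structural fact driving the construction is that in any code of the form $\langle\bar{C}_k,x\rangle$ all \emph{nonzero} cosets of $\bar{C}_k$ carry the same set of weights: for $\mu\in\mathbb{F}_q\setminus\{0\}$ the map $c\mapsto\mu^{-1}c$ permutes $\bar{C}_k$ while $v\mapsto\mu v$ preserves Hamming weight, so $\mathrm{w}(\mu x+\bar{C}_k)=\mathrm{w}(x+\bar{C}_k)=:W_x$, and moreover $\mathrm{w}(x+\bar{C}_k)=\{\mathrm{d}(c,x):c\in\bar{C}_k\}$ since $-c$ runs over $\bar{C}_k$ as $c$ does. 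By the hypothesis of the lemma $|W_x|=q^k$, and $0\notin W_x$ because $x\notin\bar{C}_k$.

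This already pins down the target. Let $w_{\max}$ be the largest weight of $\bar{C}_k$ and put $M:=\frac{q^k-1}{q-1}$, so that $\left|\mathrm{w}(\bar{C}_k)\right|=M+1$ by \eqref{eq: number weights}. A short computation gives $\frac{q^{k+1}-1}{q-1}+1=(M+1)+q^k=\left|\mathrm{w}(\bar{C}_k)\right|+|W_x|$. In view of Bound \eqref{bound} it therefore suffices to exhibit a code of dimension $k+1$ whose weight set is exactly the disjoint union of $\mathrm{w}(\bar{C}_k)$ with a translate of $W_x$ lying above $w_{\max}$.

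For the construction, fix an integer $m>w_{\max}$, let $\bar{C}_k'$ be $\bar{C}_k$ with $m$ zero coordinates appended (this alters neither its dimension nor its weights), and set $x':=(x\mid\mathbf{1}^m)$, i.e.\ $x$ with $m$ ones appended. Then $x'\notin\bar{C}_k'$, since every codeword of $\bar{C}_k'$ ends in $\mathbf{0}^m$, so $\bar{C}_{k+1}:=\langle\bar{C}_k',x'\rangle$ has dimension $k+1$. For $c\in\bar{C}_k$ one has $\mathrm{d}\big((c\mid\mathbf{0}^m),\,x'\big)=\mathrm{d}(c,x)+m$, hence the common weight set of the $q-1$ nonzero cosets of $\bar{C}_k'$ inside $\bar{C}_{k+1}$ is $W_x+m$, a set of $q^k$ elements all strictly larger than $w_{\max}$. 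Consequently $\mathrm{w}(\bar{C}_{k+1})=\mathrm{w}(\bar{C}_k)\sqcup(W_x+m)$, so $\left|\mathrm{w}(\bar{C}_{k+1})\right|=(M+1)+q^k=\frac{q^{k+1}-1}{q-1}+1$, and with \eqref{bound} this forces $\left|\mathrm{w}(\bar{C}_{k+1})\right|=\mathcal{L}_q(k+1)$, i.e.\ $\bar{C}_{k+1}$ satisfies \eqref{eq: number weights}.

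The one point deserving care is the identity $\frac{q^{k+1}-1}{q-1}+1=\left|\mathrm{w}(\bar{C}_k)\right|+|W_x|$: it shows there is no slack whatsoever, so the construction is compelled to be tight and \emph{every} possible coincidence between an old weight and a new one must be destroyed — precisely what appending the $\mathbf{0}^m$/$\mathbf{1}^m$ tails accomplishes, by translating $W_x$ entirely past $w_{\max}$. The remaining ingredients (weight invariance across nonzero cosets, the distance identity after padding, and the dimension count) are routine.
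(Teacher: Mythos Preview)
Your proof is correct and follows essentially the same construction as the paper: append a block of $m$ zeros to $\bar{C}_k$ and a block of $m$ ones to $x$, so that the coset weights $W_x+m$ land strictly above the original weights of $\bar{C}_k$ and the weight set of the span is the disjoint union $\mathrm{w}(\bar{C}_k)\sqcup(W_x+m)$. The only cosmetic difference is that the paper takes $m$ equal to the maximum weight $w_{\max}$ of $\bar{C}_k$ rather than strictly greater, which also suffices because $0\notin W_x$ forces every element of $W_x+w_{\max}$ to exceed $w_{\max}$.
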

\begin{proof}
Let $m$ be the maximum weight of a codeword in $\bar{C}_k$. 
We construct $\bar{G}_{k+1}$ as the $\left(k+1\right)\times \left(\mathrm{len}\left(\bar{C}_k\right)+m\right)$ $q$-ary matrix in the following way:
\begin{equation}\nonumber
\bar{G}_{k+1}=
\begin{bmatrix}
\bar{G}_{k} & 0 \\
x & 1\cdots1
\end{bmatrix} .
\end{equation}
If we ignore the $m$ zeros at the end, the linear combinations of the first $k$ rows of $G_{k+1}$ are exactly the codewords of $\bar{C}_k$, hence we have at least $\frac{q^k-1}{q-1}+1$ distinct weights.\\
Any other codeword in $\bar{C}_{k+1}$ is a linear combination involving the last row of $\bar{G}_{k+1}$ and a codeword $c\in\bar{C}_k$. More precisely, any other codeword is of the form $\alpha y$, with $\alpha\neq 0$ an element of $\mathbb{F}_q$, and $y$ the concatenation of $c-x$ and a sequence of $m+1$ ones. 
Observe that $\mathrm{w}\left(\alpha y\right)=\mathrm{w}\left(y\right)$. \\
The weight of $y$ is equal to the sum between $m$ and the distance between $c$ and $x$. By hypothesis, $\mathrm{d}\left(c_1,x\right)\neq\mathrm{d}\left(c_2,x\right)$ for each $c_1\neq c_2\in\bar{C}_k$, hence we have $q^k$ distinct weights. Moreover, for any $y$ we have $\mathrm{w}\left(y\right)=m+\mathrm{d}\left(c,x\right)>m$. \\
We conclude that $\left|\mathrm{w}\left(C_{k+1}\right)\right|=\left|\mathrm{w}\left(C_{k}\right)\right|+q^k=\frac{q^{k+1}-1}{q-1}+1$.
\end{proof}
Lemma \ref{lem: case x exists} deals with a very particular case, still, in such restrictive hypotheses, it allows us to directly derive $\bar{C}_{k+1}$ from $\bar{C}_k$. However we cannot always assume the existence of the vector $x$ with the desired properties. To address the general case we will therefore also use Lemma \ref{lem: x generic}. \\
Let $C$ be a code and $x$ a vector of the same length of $C$. We denote with $C-x$ the coset of $C$ obtained by translating the codewords of $C$ by $-x$, namely, $C-x=\left\{c-x\;\mid c\in C\right\}$.
\begin{lem}\label{lem: x generic}
Let $\mathbb{F}_q$ be a finite field with at least $3$ elements, and let $C_k$ be a linear code of dimension $k$ over $\mathbb{F}_q$. Let $x\notin C_k$, with $\left|\mathrm{w}\left(C_k-x\right)\right|<q^k$. Then there exist $x'\notin C_k'$ with $\left|\mathrm{w}\left(C'_k-x'\right)\right|>\left|\mathrm{w}\left(C_k-x\right)\right|$.
\end{lem}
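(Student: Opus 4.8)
The plan is to argue by a ``spread and split'' trick. Since the coset $C_k - x$ has fewer than $q^k$ distinct weights while $|C_k| = q^k$, the pigeonhole principle gives two distinct codewords $c_1 \neq c_2$ with $\mathrm{d}(c_1,x) = \mathrm{d}(c_2,x)$. I want to build a code $C'_k$ of dimension $k$ and a vector $x' \notin C'_k$ whose coset has at least one more distinct weight than $C_k - x$. The only genuine difficulty is to do this without inadvertently \emph{merging} two weights that were previously distinct: a single extra coordinate can split the tie between $c_1$ and $c_2$, but it can just as easily glue a split codeword onto a third codeword that used to sit one unit away. The remedy is to first rescale all weights so that consecutive distinct coset weights are at least $2$ apart, and only then to append one coordinate worth exactly $1$ on $c_2$ and $0$ on $c_1$.

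Concretely, I would first replace $G_k$ by $[G_k \mid G_k]$ and $x$ by $(x \mid x)$. This doubles every coset weight, so if $d_1 < \cdots < d_\ell$ are the distinct weights of $C_k - x$ (with $\ell = |\mathrm{w}(C_k - x)|$), the doubled coset has distinct weights $2d_1 < \cdots < 2d_\ell$, and the ``windows'' $\{2d_i, 2d_i + 1\}$ are pairwise disjoint because the $d_i$ are distinct integers. Next I would pick a linear functional $\phi$ on $C_k$ with $\phi(c_1) \neq \phi(c_2)$ --- possible since $c_1 - c_2 \neq 0$ --- realize it as an extra column $g$ (so that $\phi(uG_k) = ug$ for all $u \in \mathbb{F}_q^k$), and set
\begin{equation}\nonumber
G'_k = \begin{bmatrix} G_k & G_k & g \end{bmatrix}, \qquad x' = (x \mid x \mid \phi(c_1)) .
\end{equation}
The codeword of $C'_k$ corresponding to $c \in C_k$ is then $(c \mid c \mid \phi(c))$, and its distance from $x'$ equals $2\,\mathrm{d}(c,x)$ if $\phi(c) = \phi(c_1)$ and $2\,\mathrm{d}(c,x) + 1$ otherwise.

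It then remains only to read off the consequences. The assignment $c \mapsto (c \mid c \mid \phi(c))$ is a bijection $C_k \to C'_k$; $G'_k$ still has rank $k$, so $\dim C'_k = k$; and $x' \notin C'_k$, because the first block of every codeword of $C'_k$ lies in $C_k$ whereas the first block of $x'$ is $x \notin C_k$. For every $i$, any codeword at distance $d_i$ from $x$ yields a coset weight inside the window $\{2d_i, 2d_i+1\}$, so $\mathrm{w}(C'_k - x')$ meets each of the $\ell$ disjoint windows, giving $|\mathrm{w}(C'_k - x')| \geq \ell$; and in the window $\{2d_j, 2d_j + 1\}$ with $d_j = \mathrm{d}(c_1,x)$ the codeword $c_1$ realizes $2d_j$ while $c_2$ realizes $2d_j+1$, so that window is hit twice and $|\mathrm{w}(C'_k - x')| \geq \ell + 1 > |\mathrm{w}(C_k - x)|$, as wanted.

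I expect the bookkeeping about disjoint windows to be the fiddliest part, but it is elementary. Two further remarks. Running the very same count with the zero vector in place of $x$ shows that $C'_k$ has exactly as many distinct codeword weights as $C_k$ --- no merging occurs, and Bound \eqref{bound} rules out any gain once $C_k$ is already extremal --- so the construction turns a code with a maximal codeword weight spectrum into another such code, which is precisely what lets it feed the induction of Theorem \ref{thm: Lqk}: iterate this lemma until the coset reaches $q^k$ distinct weights, then invoke Lemma \ref{lem: case x exists}. Finally, the argument does not actually use $q \geq 3$; the hypothesis is harmless, the binary case being already covered by \cite{shi2018etal}.
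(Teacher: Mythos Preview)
Your argument is correct, and it follows the same high-level ``spread then split'' strategy as the paper, but the implementation is genuinely different. The paper \emph{triples} every coordinate and then modifies a single existing coordinate of the tripled vector $x^{(3)}$; since such a modification can move each coset distance by $-1$, $0$, or $+1$, the windows must have width $3$, and this is why tripling is needed. Moreover, in the paper's Case~1 one must replace the entry $\gamma$ by a field element distinct from both $\alpha$ and $\beta$, which is exactly where the hypothesis $q\ge 3$ enters. You instead \emph{double} the code and then \emph{append} one new coordinate determined by a linear functional separating $c_1$ from $c_2$; an appended coordinate can only shift a distance by $0$ or $+1$, so width-$2$ windows $\{2d_i,2d_i+1\}$ suffice, and no third field element is ever required. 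Your version therefore works uniformly for all $q\ge 2$, as you note. A second difference is that the paper's $C_k'$ is simply the tripled code, so its weight spectrum is trivially preserved, whereas your $C_k'$ carries an extra column and you need the window argument (applied with $x=0$) to see that extremality is not lost; your bookkeeping for this is correct. Overall your construction is a clean variant that removes the dependence on $q\ge 3$ at the modest cost of checking that the added column does not disturb the codeword weight spectrum.
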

\begin{proof}
Given $C_k$ and $x$, if there exists $x'$ for which $\left|\mathrm{w}\left(C_k-x'\right)\right|>\left|\mathrm{w}\left(C_k-x\right)\right|$, we can conclude. We can therefore assume that no such vector $x'$ in $\left(\mathbb{F}_q\right)^{\mathrm{len}\left(C_k\right)}$ exists. 
\\
In this setting, we construct a new code $C_k'$ and a new vector $x^{\left(3\right)}$ outside of it by tripling each coordinate. The new code will have three times the length of $C_k$, and the distance between any two codewords will be three times larger. The same is true for the distance between $x^{\left(3\right)}$ and a codeword $c'$ of $C_k'$. In particular, if we consider two codewords $c_1$ and $c_2$ in $C_k$ such that $\mathrm{d}\left(c_1,x\right)=\mathrm{d}\left(c_2,x\right)$, then $\mathrm{d}\left(c'_1,x^{\left(3\right)}\right)=\mathrm{d}\left(c'_2,x^{\left(3\right)}\right)=3\mathrm{d}\left(c_1,x\right)$. We remark that the existence of such two codewords $c_1$ and $c_2$ is guaranteed by the hypotheses of the lemma. We denote $\mathrm{d}\left(c_1,x\right)$ with $t$, hence $\mathrm{d}\left(c'_1,x^{\left(3\right)}\right)$ and $\mathrm{d}\left(c'_2,x^{\left(3\right)}\right)$ are both equal to $3t$. Since $c_1'\neq c_2'$, there is at least a coordinate in which the two codewords are different, so without loss of generality we suppose they differ in the first coordinate. Let $\alpha$, $\beta$ and $\gamma\in\mathbb{F}_q$ be respectively the values of the first coordinates of the two codewords $c_1'$, $c_2'$ and of $x^{\left(3\right)}$. 
\\
We have two cases:
\begin{enumerate}
\item $\gamma$ is equal to either $\alpha$ or $\beta$;
\item $\gamma$ is not equal to $\alpha$ nor to $\beta$.
\end{enumerate} 
In the first case, let us say that $\gamma=\alpha$, namely both $c_1'$ and $x'$ are equal to $\alpha$ in the first coordinate, while $c_2$ have $\beta\neq \alpha$ in the first coordinate. We substitute now the first coordinate of $x^{\left(3\right)}$ with an element of $\mathbb{F}_q$ different from both $\alpha$ and $\beta$, so that the distance between $x^{\left(3\right)}$ and $c_1'$ would increase by $1$. We denote with $x'$ this modified vector. By computing the distances we obtain
$$
\begin{array}{l}
\mathrm{d}\left(c_1',x'\right)=3t+1\\
\mathrm{d}\left(c_2',x'\right)=3t.
\end{array}
$$
In the second case, $\gamma\neq \alpha\neq \beta\neq\gamma $. We change $\gamma$ into $\alpha$, and we obtain
$$
\begin{array}{l}
\mathrm{d}\left(c_1',x'\right)=3t-1\\
\mathrm{d}\left(c_2',x'\right)=3t.
\end{array}
$$
Either case, the distance between $c_1'$ and $c_2'$ did not change.
\\
To conclude, the proof that $\left|\mathrm{w}\left(C'_k-x'\right)\right|>\left|\mathrm{w}\left(C_k-x\right)\right|$ directly follows from the fact that for any codeword $c'\in C_k'$ we have $3\mid\mathrm{d}\left(c',x^{\left(3\right)}\right)$, and to obtain $x'$ from $x^{\left(3\right)}$ we changed a single coordinate.
\end{proof}
Observe that in the proof we are using the fact that $q\ge3$, so this proof cannot be directly applied to the binary case. However, we will not need it, since the binary case was already covered in  \cite{shi2018etal}. 
\begin{proof}[Proof of Theorem \ref{thm: Lqk}]
As already mentioned, we prove the theorem by induction, using Equation \eqref{eq: case k2} as initial step. Let $q>2$ and let $\bar{C}_k$ be a $q$-ary code as defined above, namely a code of dimension $k$ for which $\left|\mathrm{w}\left(\bar{C}_k\right)\right|=\frac{q^k-1}{q-1}+1$.  If there exists $x$ as in the hypotheses of Lemma \ref{lem: case x exists}, then we can construct a code $\bar{C}_{k+1}$. \\
Otherwise, we make use of Lemma \ref{lem: x generic}. We remark that if we start by $\bar{C}_k$, by tripling its coordinates we end up with a code $\bar{C}_k'$ which satisfy itself Equation \eqref{eq: number weights}. We keep applying Lemma \ref{lem: x generic}, and each time we increase the number of distinct weights of the coset $\left\{\bar{C}_k-x\right\}$. Since the number of elements are $q^2$, we eventually end up with $q^k$ distinct weights, and we can apply Lemma \ref{lem: case x exists}.
\end{proof}
The proposed proof provides a method to explicitly construct codes with the largest possible number of distinct weights. This method also gives us a coarse upper bound on the minimum length of such a code. In the worst-case scenario, Lemma \ref{lem: x generic} will be applied $q^{k}-1$ times before applying Lemma \ref{lem: case x exists}. Since the codeword of larger weight in $\bar{C}_k$ has at most $\mathrm{len}\left(\bar{C}_{k}\right)$ non-zero coordinates, then there exist $\bar{C}_{k+1}$ such that
$$
\mathrm{len}\left(\bar{C}_{k+1}\right)\leq \left(3^{q^k-1}+1\right)\mathrm{len}\left(\bar{C}_{k}\right).
$$


\section{Existence of codes with an arbitrary number of distinct weights}
In this section we look at the problem from a slightly different angle, asking ourselves whether it exist a linear $q$-ary code for a given number of distinct weights. The results presented in this section allow us to completely solve this problem in the binary case, though the general $q$-ary case still remains an open problem. 
\\
We denote with $\mathcal{I}_q$ the set of integers for which it exist a code over $\mathbb{F}_q$ with $s$ distinct non-zero weights. 
Observe that Theorem \ref{thm: Lqk} give us the largest $i$ in each $\mathcal{I}_q$, hence $\mathcal{I}_q\subseteq \left\{1,\ldots, \frac{q^k-1}{q-1}+1\right\}$.
\\
Moreover, we recall that for each field $\mathbb{F}_q$ and for each dimension $k$, it exist a linear equidistant code $\mathcal{S}_{q,k}$. We recall here its definition and parameters, an interested reader can go through \cite{huffman2010fundamentals} for a more deep understanding on the subject.
\begin{defn}
Let $\mathcal{G}_k$ be a $k\times \frac{q^k-1}{q-1}$ matrix over $\mathbb{F}_q$ with the property that its columns are pair-wise independent. We call Simplex code $\mathcal{S}_{q,k}$ the code generated by $G$.  
\end{defn}
\begin{prop}\label{prop: simplex}
$\mathcal{S}_{q,k}$ is an $[ \frac{q^k-1}{q-1},k, q^{k-1}]$ equidistant code, namely any pair of codewords are at distance $q^{r-1}$.
\end{prop}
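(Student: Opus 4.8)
The plan is to reduce the whole statement to a single weight computation. Since $\mathcal{S}_{q,k}$ is linear, $\mathrm{d}(c_1,c_2)=\mathrm{w}(c_1-c_2)$ for any two codewords, and as $c_1,c_2$ range over the code the difference $c_1-c_2$ ranges over all codewords, with distinct codewords corresponding exactly to non-zero differences. Hence it suffices to prove that \emph{every non-zero codeword of $\mathcal{S}_{q,k}$ has weight exactly $q^{k-1}$}: this yields at once the equidistant property and the minimum distance $q^{k-1}$. The length $\frac{q^k-1}{q-1}$ is immediate from the shape of $\mathcal{G}_k$, and the dimension will be shown to equal $k$ by checking that $\mathcal{G}_k$ has rank $k$, so that $m\mapsto m\mathcal{G}_k$ is injective and non-zero messages produce non-zero codewords.

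First I would translate the hypothesis on $\mathcal{G}_k$ into combinatorial terms. Pairwise independence of the columns forces each column to be non-zero and forces no two columns to span the same $1$-dimensional subspace of $\mathbb{F}_q^k$; since there are $\frac{q^k-1}{q-1}$ columns and exactly $\frac{q^k-1}{q-1}$ distinct $1$-dimensional subspaces of $\mathbb{F}_q^k$, the columns form a complete set of representatives, one from each line through the origin. In particular they span $\mathbb{F}_q^k$, so $\mathrm{rank}(\mathcal{G}_k)=k$ and $\dim(\mathcal{S}_{q,k})=k$.

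Now fix a non-zero message $m\in\mathbb{F}_q^k$ and write $c=m\mathcal{G}_k$, whose $i$-th coordinate is the scalar product $m\cdot g_i$ with $g_i$ the $i$-th column of $\mathcal{G}_k$. Then $\mathrm{w}(c)$ equals $\frac{q^k-1}{q-1}$ minus the number of columns lying in the hyperplane $m^{\perp}=\{v:m\cdot v=0\}$. The key observation is that orthogonality to $m$ depends only on the line spanned by a vector, so the columns inside $m^{\perp}$ are in bijection with the $1$-dimensional subspaces of the $(k-1)$-dimensional space $m^{\perp}$, of which there are $\frac{q^{k-1}-1}{q-1}$. Therefore $\mathrm{w}(c)=\frac{q^k-1}{q-1}-\frac{q^{k-1}-1}{q-1}=q^{k-1}$, independently of $m$, which completes the proof. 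The only mildly delicate points are these two counting steps --- passing from ``pairwise independent columns'' to ``one representative per line'', and from ``columns in $m^{\perp}$'' to ``lines inside $m^{\perp}$'' --- but both are standard enumerations of subspaces over $\mathbb{F}_q$, so no real obstacle is expected.
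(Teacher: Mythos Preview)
Your argument is correct and is the standard proof of this classical fact: reduce the equidistance claim to showing every non-zero codeword has weight $q^{k-1}$, identify the columns of $\mathcal{G}_k$ with a full set of representatives of the $1$-dimensional subspaces of $\mathbb{F}_q^k$, and count how many of those lines fall inside a fixed hyperplane $m^{\perp}$.

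Note, however, that the paper does not actually prove this proposition at all. It is stated as a well-known result, with a pointer to \cite{huffman2010fundamentals} for background, and is then used as a black box (the existence of equidistant codes of every dimension) in the subsequent constructions. So there is no ``paper's own proof'' to compare against; what you have written is a clean self-contained justification that the paper simply imports from the literature.
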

Our interest in the class of Simplex codes is that their existence prove that for any $q$ and any $k$ there exist a code with a single non-zero weight. We will make use of these codes to prove the existence of codes with arbitrary distinct weights.
\begin{lem}\label{lem: induction small s}
Assume that there exist a $q$-ary linear code of dimension $k$ with $s$ distinct weights. Then, there exists a $q$-ary linear code of dimension $k+1$ with $s+1$ distinct weights. 
\end{lem}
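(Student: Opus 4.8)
The plan is to start from a $q$-ary linear code $C_k$ of dimension $k$ with $s$ distinct non-zero weights, say generated by $G_k$, and to augment it with one extra coordinate column together with a carefully chosen equidistant block. Concretely, I would set
\begin{equation}\nonumber
G_{k+1}=
\begin{bmatrix}
G_k & 0 \\
0 & \mathcal{G}_{1}
\end{bmatrix},
\end{equation}
where the bottom-right block is chosen so that the new generator $(0\mid \mathcal{G}_1)$, on its own, produces codewords all of the same weight $w^*$, and where $w^*$ is taken strictly larger than every weight already occurring in $C_k$. Since a simplex-type (or merely repetition-based) construction lets us make a single-weight code of any prescribed weight by repeating columns, this is always achievable; the point is only to have control of the value $w^*$ and to keep it outside $\mathrm{w}(C_k)$.

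The key steps, in order, are: (i) observe that the codewords of $C_{k+1}$ split into those using a zero multiple of the last row — these are exactly the codewords of $C_k$ padded with zeros, contributing the $s$ old weights — and those using a nonzero multiple $\alpha$ of the last row; (ii) for the latter, the weight is $\mathrm{w}(c) + w^*$ where $c$ ranges over $C_k$ (here I use that the last block contributes weight $w^*$ regardless of the nonzero scalar $\alpha$, and that the first block contributes $\mathrm{w}(c)$ regardless, because the two blocks occupy disjoint coordinates); (iii) conclude that the new weights form the set $\{\,\mathrm{w}(c)+w^* : c\in C_{k+1}\text{ uses the last row}\,\}$, which has the same cardinality as $\mathrm{w}(C_k)\cup\{0\}$ shifted, and in particular contains the single new weight $w^*$ (coming from $c=0$) plus shifted copies of the old nonzero weights; (iv) check that $w^* > \max\mathrm{w}(C_k)$ forces $w^*\notin\mathrm{w}(C_k)$, so exactly one weight is added and no old weight is destroyed, giving $s+1$ distinct non-zero weights in a code of dimension $k+1$.

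The main obstacle — really the only subtlety — is step (iii)/(iv): one must make sure that shifting the old nonzero weights by $w^*$ cannot accidentally coincide with an unshifted old weight, and that the genuinely new weight contributed is exactly $w^*$ and nothing else. Choosing $w^*$ larger than every weight in $\mathrm{w}(C_k)$ handles this cleanly: the shifted weights $\{\mathrm{w}(c)+w^*\}$ all exceed $w^*$, hence lie strictly above all old weights, so the new weight set is the disjoint union of $\mathrm{w}(C_k)$ (size $s$) with the shifted set, and the shifted set's minimum is precisely $w^*$ (attained at $c=0$); thus at least one new weight, namely $w^*$, is added. (In fact more new weights appear, but we only need one, and we certainly lose none.) This is exactly the bookkeeping already carried out in the proof of Lemma \ref{lem: case x exists}, so the argument is a direct adaptation of that block-diagonal padding idea, now using an equidistant block in place of the all-ones row.
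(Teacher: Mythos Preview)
Your construction does not give a code with $s+1$ distinct non-zero weights; it gives one with $2s+1$. With the block-diagonal matrix
\[
G_{k+1}=\begin{bmatrix} G_k & 0 \\ 0 & \mathcal{G}_1 \end{bmatrix},
\]
the codewords with $a_{k+1}=0$ contribute the old weights $w_1,\ldots,w_s$, while those with $a_{k+1}\neq 0$ contribute $w^*,\,w_1+w^*,\ldots,w_s+w^*$. With $w^*>\max_i w_i$ all $2s+1$ of these values are distinct. Your parenthetical remark ``in fact more new weights appear, but we only need one'' is exactly where the argument breaks: the lemma asserts the existence of a code with \emph{exactly} $s+1$ distinct non-zero weights, not at least $s+1$. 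Producing a code with $2s+1$ weights does not establish that.

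The paper's construction avoids this by attaching the equidistant block to \emph{all} $k+1$ rows rather than only to the new one:
\[
G_{k+1,s+1}=\begin{bmatrix}\begin{matrix}0\\G_{k,s}\end{matrix} & \mathcal{G}_{q,k+1}\end{bmatrix},
\]
with $\mathcal{G}_{q,k+1}$ a generator of the $(k{+}1)$-dimensional simplex code. Now every non-zero codeword has a non-zero simplex part of weight $q^k$, so its weight is $\mathrm{w}(c')+q^k$ with $c'\in C_{k,s}$; the old weights do not survive unshifted, and the weight set is precisely $\{\,w+q^k : w\in\{0,w_1,\ldots,w_s\}\,\}$, of size exactly $s+1$. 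The essential idea you are missing is that the constant shift must be applied to \emph{every} non-zero codeword, which is what forces the equidistant block to span the full new dimension.
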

\begin{proof}
First of all, notice that Proposition \ref{prop: simplex} implies the existence of $C_{k,1}$ for each dimension $k$.\\
We denote with $G_{k,s}$ the generator matrix of an $[n,k]_q$ code $C_{k,s}$ with $s$ distinct weights.\\
We consider now a matrix of the form
\begin{equation}\label{eq: Gk+1 small s}
G_{k+1,s+1}=\begin{bmatrix}
\begin{matrix}0\\G_{k,s}\end{matrix} & \mathcal{G}_{q,k+1}\\
\end{bmatrix}.
\end{equation}
Any non-zero linear combination of its rows can be written as $c=\left(c'\mid c''\right)$, where $c'$ belongs to the code generated by $G_{k,s}$, and $c''$ is a non-zero codeword of the Simplex code of dimension $k+1$. Observe that if $c'$ has a weight equal to $w$, then the weight of $c$ is $w+q^k$. Hence, $G_{k+1,s+1}$ generates a code $C_{k+1,s+1}$ of dimension $k+1$ and with $s+1$ distinct weights. 
\end{proof}
\begin{lem}\label{lem: induction large s}
Assume that there exists a $q$-ary linear code of dimension $k$ with $s$ distinct weights. Then, there exists a $q$-ary linear code of dimension $k+1$ with $q^k+s$ distinct weights. 
\end{lem}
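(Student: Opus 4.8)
The plan is to mimic the construction used in the proof of Theorem \ref{thm: Lqk}, but now starting from an arbitrary code rather than a maximum-weight-spectrum one. Suppose $C_{k,s}$ is a $q$-ary $[n,k]$ code with $s$ distinct non-zero weights, and write $\bar C_k$ for a dimension-$k$ MWS code, which exists by Theorem \ref{thm: Lqk}. The idea is to take a direct sum: a generator matrix of the form
\begin{equation}\nonumber
G_{k+1,\,q^k+s}=
\begin{bmatrix}
\begin{matrix} 0 \\ G_{k,s} \end{matrix} & \begin{matrix} \bar G_{k'} \\ \text{(block forcing an extra row)} \end{matrix}
\end{bmatrix},
\end{equation}
where the left block contributes the $s$ old weights and the right block is built, via Lemma \ref{lem: case x exists} (combined with Lemma \ref{lem: x generic} to manufacture the required vector $x$ when $q>2$, or directly in the binary case), so that the linear combinations involving the new $(k+1)$-st row realize $q^k$ further weights, all strictly larger than any weight coming from the left block alone. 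Concretely, one reuses the ``$\bar C_k$ together with an all-ones padding'' trick from Lemma \ref{lem: case x exists}: there the passage from dimension $k$ to $k+1$ produced $q^k$ brand-new weights, each of the form $m+\mathrm d(c,x)>m$, on top of the original weight set. Here I would graft that block onto $C_{k,s}$ so those $q^k$ new weights are also pushed above all $s$ old ones by a suitable global padding of zeros, keeping the two weight sets disjoint.

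The key steps, in order, would be: (1) by Theorem \ref{thm: Lqk} fix a dimension-$k$ code $\bar C_k$ with $\frac{q^k-1}{q-1}+1$ distinct weights; (2) apply Lemma \ref{lem: x generic} repeatedly and then Lemma \ref{lem: case x exists} — exactly as in the proof of Theorem \ref{thm: Lqk} — to obtain a generator matrix whose last row, when combined with the rows of $\bar G_k$, yields $q^k$ distinct weights $w_1<\dots<w_{q^k}$, each exceeding the maximum weight $M$ of $C_{k,s}$ (this last inequality is arranged by appending enough zero coordinates to $G_{k,s}$, or equivalently enough ones to the new row, so that every codeword touching the new row has weight $>M$); (3) form the block matrix placing $G_{k,s}$ (padded with a zero row on top and zero columns on the right) next to this MWS-derived block (padded with zero columns on the left); (4) verify the weight count: a codeword not using the new row has one of the $s$ weights of $C_{k,s}$ (the MWS block contributes $0$ there), while a codeword using the new row has a weight of the form $w_i + (\text{something}\ge 0)$, hence lies in the upper band and, by the distinctness engineered in step (2), realizes exactly $q^k$ values; the two bands are disjoint, giving $q^k+s$ distinct weights total.

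The main obstacle is step (2) together with the disjointness in step (4): one must ensure not merely that the new row contributes $q^k$ distinct weights in isolation, but that these remain distinct — and separated from the old $s$ weights — after the two blocks are concatenated, where each codeword's weight becomes the \emph{sum} of its left-block and right-block weights. The left block contributes a value in $\{w : w\in\mathrm w(C_{k,s})\cup\{0\}\}$, so a priori sums could collide. The clean fix is the padding/separation argument: make the right block's contribution so much larger (by inserting $>M$ ones in the new row, as in Lemma \ref{lem: case x exists} where the padding length was the maximum weight $m$) that the left-block contribution is a low-order term which cannot bridge the gaps between consecutive $w_i$; equivalently, scale the left block by a simplex-type repetition so its weights are multiples of a large common value and the $w_i$ differences are not. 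Getting this bookkeeping exactly right — choosing the padding length and, if needed, a repetition factor so that all $q^k+s$ sums are pairwise distinct — is the technical heart of the argument; everything else is a direct transcription of the constructions already established in Section \ref{sec: proof}.
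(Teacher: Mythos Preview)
Your construction introduces an unnecessary auxiliary object --- the MWS code $\bar C_k$ --- and this creates a genuine gap. In the block matrix you describe, the $k$ rows carrying $G_{k,s}$ on the left must also carry $k$ rows of the MWS-derived block on the right (whatever alignment you choose, some $k$ rows of $\bar G_{k+1}$ sit next to the $k$ rows of $G_{k,s}$). Hence a codeword $\lambda G$ that ``does not use the new row'' has weight $w(\mu G_{k,s}) + w(\mu'\,\bar G_{k+1})$, where $\mu,\mu'$ are both determined by the same coefficient vector $\lambda$; the right-hand summand is \emph{not} zero, contrary to your assertion in step~(4). These sums need not realise exactly the $s$ weights of $C_{k,s}$: they may collapse or proliferate, and no padding by ones or repetition of blocks can decouple the two summands, because they are tied together through $\lambda$. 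The same coupling spoils the count for codewords that do use the new row: the right block contributes $q^k$ distinct values, but each is then shifted by a left-block contribution that itself varies with $\lambda$. The obstacle you flag (separating the two weight bands) is real but secondary; the primary obstacle is that the ``old'' band is not $\mathrm w(C_{k,s})$ at all once the right block is attached.

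The paper's route is much simpler: apply the construction of Lemma~\ref{lem: case x exists} (together with Lemma~\ref{lem: x generic}) \emph{directly to $C_{k,s}$}, not to an auxiliary MWS code. Lemma~\ref{lem: x generic} is stated for an arbitrary linear code, and tripling coordinates preserves the number of distinct weights, so after finitely many iterations one has a code still with $s$ distinct weights together with a vector $x$ at $q^k$ pairwise distinct distances from its codewords. The block
\[
\begin{bmatrix} G_k & 0 \\ x & 1\cdots 1 \end{bmatrix}
\]
of Lemma~\ref{lem: case x exists} then yields $s + q^k$ distinct weights outright: the $s$ old weights lie in $\{0,\ldots,m\}$ and the $q^k$ new ones are each $m + \mathrm d(c,x) > m$. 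The point you are missing is that neither lemma actually requires its input code to be MWS, so there is no reason to route the argument through $\bar C_k$.
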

\begin{proof}
Let $C$ be a code as in our hypothesis. Then, by applying the same argument as in the proof of Theorem \ref{thm: Lqk}, we obtain a code which proves our claim.
\end{proof}
By combining together the results of Lemmas \ref{lem: induction small s} and \ref{lem: induction large s} we cannot prove yet the general $q$-ary case. Indeed, we observe that, even if we assume the existence of codes with $s$ distinct weight for any dimension up to $k$ and for any $s$ in the range $\left\{1,\ldots,\frac{q^k-1}{q-1}\right\}$, we cannot construct codes of dimension $k+1$ corresponding to values of $s$ between $\frac{q^k-1}{q-1}$ and $q^k$. Still, we can focus on the binary case, and complete the characterisation in this particular case.
\begin{cor}
For any dimension $k$ and for any integer $s\in\left\{1,\ldots,2^k-1\right\}$, there exists a linear binary code with $s$ distinct non-zero weights.
\end{cor}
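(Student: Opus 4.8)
The plan is a straightforward induction on the dimension $k$, whose only ingredients are Lemma \ref{lem: induction small s}, Lemma \ref{lem: induction large s}, and the existence of the Simplex codes (Proposition \ref{prop: simplex}); in fact I would prove the slightly stronger statement that the required code can always be taken of dimension exactly $k$. For the base case $k=1$ the only value to realise is $s=1$, and the one-dimensional code generated by a single non-zero vector (equivalently $\mathcal{S}_{2,1}$) has precisely one non-zero weight.

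For the inductive step, suppose that for dimension $k$ every $s\in\{1,\ldots,2^k-1\}$ occurs as the number of distinct non-zero weights of some binary linear code of dimension $k$. Given $s\in\{1,\ldots,2^{k+1}-1\}$, I would realise it in dimension $k+1$ by splitting into three cases. If $s=1$, take $\mathcal{S}_{2,k+1}$. If $2\le s\le 2^k$, then $s-1\in\{1,\ldots,2^k-1\}$, so by the inductive hypothesis there is a binary code of dimension $k$ with $s-1$ distinct non-zero weights, and applying Lemma \ref{lem: induction small s} to it yields a binary code of dimension $k+1$ with $s$ distinct non-zero weights. If $2^k+1\le s\le 2^{k+1}-1$, then $s-2^k\in\{1,\ldots,2^k-1\}$, so by the inductive hypothesis there is a binary code of dimension $k$ with $s-2^k$ distinct non-zero weights, and applying Lemma \ref{lem: induction large s} to it yields a binary code of dimension $k+1$ with $s$ distinct non-zero weights. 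Since $\{1\}\cup\{2,\ldots,2^k\}\cup\{2^k+1,\ldots,2^{k+1}-1\}=\{1,\ldots,2^{k+1}-1\}$, the induction closes.

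The argument has no genuine obstacle; the single point that must be checked is that the three intervals above tile $\{1,\ldots,2^{k+1}-1\}$ with no gap. This is exactly the binary instance of the discussion preceding the corollary: the ``small $s$'' construction of Lemma \ref{lem: induction small s} reaches up to $2^k=\frac{2^k-1}{2-1}+1$ distinct weights (one beyond the dimension-$k$ maximum $2^k-1$ guaranteed by Theorem \ref{thm: Lqk}), while the ``large $s$'' construction of Lemma \ref{lem: induction large s} begins producing counts at $2^k+1$, so the two ranges are consecutive. For $q>2$ the corresponding ranges are separated by the gap $\{\frac{q^k-1}{q-1}+2,\ldots,q^k\}$, which is precisely why the same induction does not resolve the general case. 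I would also remark that the case $s=1$ cannot be absorbed into the inductive step, since neither lemma, applied in dimension $k$, ever outputs a dimension-$(k+1)$ code with a single non-zero weight, so the Simplex code is genuinely needed there.
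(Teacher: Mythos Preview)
Your proposal is correct and follows essentially the same argument as the paper: induction on $k$ with base case $k=1$, the Simplex code handling $s=1$, Lemma~\ref{lem: induction small s} covering $s\in\{2,\ldots,2^k\}$, and Lemma~\ref{lem: induction large s} covering $s\in\{2^k+1,\ldots,2^{k+1}-1\}$. Your additional remarks on why the ranges meet exactly in the binary case and why the Simplex code is genuinely needed for $s=1$ are accurate and match the paper's surrounding discussion.
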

\begin{proof}
We proceed by induction on the dimension, using as initial step $k=1$. In this particular case we are dealing with codes containing only $2$ words, and the maximum number of non-zero weights is trivially $2^k-1=1$, i.e. any code of dimension $1$ proves the case $k=1$. We also observe that for $s=1$ we simply rely on the existence of the $k$-th dimensional Simplex Code.
\\
As inductive step, we assume that for any integer $s$ in the range $\left\{1,\ldots,2^k-1\right\}$ there exits a linear code of dimension $k$ with $s$ distinct weights. We now apply Lemma  \ref{lem: induction small s} to each possible $s$, hence we construct codes of dimension $k+1$ and $s\in\left\{2,\ldots,2^k\right\}$ distinct weights.  Now, we  apply instead Lemma \ref{lem: induction large s}, and obtain codes with $2^k+s$ distinct weights. Since $s\in\left\{1,\ldots,2^k-1\right\}$, by Lemma \ref{lem: induction large s} we obtain codes of dimension $k+1$ corresponding to integers in the set $\left\{2^k+1,\ldots,2^{k+1}-1\right\}$. Combining the two results, we prove the existence of codes of dimension $k+1$ and number of distinct non-zero weights in the range $\left\{1, \ldots, 2^k,2^{k}+1,\ldots,2^{k+1}\right\}$.
\end{proof}


\section{Conclusions}
We provided a combinatorial proof of the conjecture presented in \cite{shi2018etal} regarding the maximum number of distinct weights that a linear $q$-ary code can have. By using similar methods, we were also able to prove the existence of binary linear codes with any number of distinct weights. The general $q$-ary case is left here as a conjecture. Other than addressing the general case, future directions of this research would be to establish new bounds on the minimum length of codes with a given number of non-zero weights.


\bibliographystyle{amsalpha}
\bibliography{Refs}


\end{document}